\documentclass[pdflatex,sn-mathphys-ay]{sn-jnl}

\usepackage{graphicx}%
\usepackage{multirow}%
\usepackage{amsmath,amssymb,amsfonts}%
\usepackage{amsthm}%
\usepackage{mathrsfs}%
\usepackage[title]{appendix}%
\usepackage{xcolor}%
\usepackage{textcomp}%
\usepackage{manyfoot}%
\usepackage{booktabs}%
\usepackage{algorithm}%
\usepackage{algorithmicx}%
\usepackage{algpseudocode}%
\usepackage{listings}%

\theoremstyle{thmstyleone}%
\newtheorem{theorem}{Theorem}%
\newtheorem{proposition}[theorem]{Proposition}%
\newtheorem{lemma}[theorem]{Lemma}

\theoremstyle{thmstyletwo}%
\newtheorem{example}{Example}%
\newtheorem{remark}{Remark}%

\theoremstyle{thmstylethree}%
\newtheorem{definition}{Definition}%

\newcommand{\ZZ}{\mathbb{Z}}
\newcommand{\OO}{\mathcal{O}}
\newcommand{\LL}{\mathcal{L}}
\newcommand{\RR}{\mathcal{R}}
\newcommand{\NN}{\mathcal{N}}
\newcommand{\PP}{\mathcal{P}}

\newcommand{\gset}{(M,f_{L},f_{R},W_{L},W_{R})}

\newcommand{\pZ}{\mathbb{Z}_{\ge 0}}
\newcommand{\oZ}{\mathbb{Z}_{\ge 1}}
\newcommand{\seq}{\{\OO_S(n)\}_{n\in\pZ}}
\newcommand{\oseq}{\{\OO(n)\}_{n\in\pZ}}
\newcommand{\epsn}{\OO_S^{EP}}
\newcommand{\sn}{\OO_S^{SN}}

\begin{document}

\title[On Ending Partizan Subtraction Nim]{On Ending Partizan Subtraction Nim}

\author*[1]{\fnm{Hiyu} \sur{Inoue}}\email{hiyuuinoue@gmail.com}
\equalcont{These authors contributed equally to this work.}

\author[1]{\fnm{Shin-nosuke} \sur{Kadowaki}}
\equalcont{These authors contributed equally to this work.}

\author[1]{\fnm{Shun-ichi} \sur{Kimura}}
\equalcont{These authors contributed equally to this work.}

\author[1]{\fnm{Haruki} \sur{Wada}}
\equalcont{These authors contributed equally to this work.}

\affil*[1]{\orgdiv{Department of Mathematics}, \orgname{Hiroshima University}, \orgaddress{\street{Kagamiyama}, \city{Higashi-Hiroshima City}, \postcode{739-8526}, \state{Hiroshima}, \country{Japan}}}

\abstract{We consider Subtraction Nim, where two players have exactly the same options, but which is partizan in the sense that at the game ending, a partizan rule is applied for the decision of the winner. We consider the following example: Let $S$ be the set of removable numbers, which is a non-empty finite subset of positive integers greater than or equal to $2$, applied for both players Left and Right. At the end of the game, Left wins if the number of remaining tokens is even, and Right wins if the number of remaining tokens is odd. We computed the outcomes for many $S$, and found surprising phenomena that in most examples of $S$ (almost $98\%$ of some samples), the outcomes are $\mathcal{L}$-positions for all large enough $n$. In comparison, $\mathcal{R}$-positions appear only occasionally. The main theorem explains why this phenomenon occurs. We prove that $n+1$ and $n-1$ are $\mathcal{L}$-positions when $n$ is an $\mathcal{R}$-position. Similarly, $\mathcal{L}$-positions appear whenever $\mathcal{P}$-positions or $\mathcal{N}$-positions appear. Only $\mathcal{L}$-positions can last forever.}

\keywords{Combinatorial Game Theory, Nim, Partizan Game}

\maketitle

\section{Introduction}\label{sec1}

An impartial game is defined as below by \citep{bib1}, and a similar definition is given in \citep{bib3}:
\begin{center}
    \textit{A game is impartial if both players have the same options from any position.}
\end{center}

In both references \citep{bib1} and \citep{bib3}, they prove following Theorem \ref{impartial outcome}, explicitly or implicitly assuming that an impartial game is either normal play or misère play:
\begin{theorem}\label{impartial outcome}
    If $G$ is a position of an impartial game then $G$ is in either $\NN$ or $\PP$.
\end{theorem}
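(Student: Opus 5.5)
The plan is an induction on the game tree, using the standing assumptions: the game is short, so every play terminates and well-founded induction over the options of a position is available, and the winner of a terminal position is fixed by a rule that does not depend on who is to move in a player-dependent way. Concretely, under normal play the player unable to move loses; under misère play that player wins. In either convention, the result at an ending position depends only on whose turn it is, and not on whether that player is Left or Right.

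The key step is a symmetry lemma. For every position $G$ of an impartial game (normal or misère), Left moving first wins $G$ if and only if Right moving first wins $G$. I will prove this by induction on the birthday (or height) of $G$.

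\emph{Base case.} If $G$ has no options, the ending rule declares the player to move the loser (normal) or the winner (misère), whichever of Left and Right that player is. Hence the two statements ``Left moving first wins'' and ``Right moving first wins'' have the same truth value.

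\emph{Inductive step.} Suppose $G$ has options. Left moving first wins $G$ exactly when there is an option $G'$ such that Right, now moving first in $G'$, loses. By the induction hypothesis applied to $G'$, Right moving first loses $G'$ if and only if Left moving first loses $G'$. Since Left and Right have the same option set from $G$, this condition is symmetric under swapping the names Left and Right. It is therefore equivalent to Right moving first winning $G$.

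With the lemma in hand, the outcome classes follow. A position lies in $\LL$ when Left wins regardless of who moves first, and in $\RR$ when Right does. If $G\in\LL$, then Left moving first wins, so by the lemma Right moving first also wins, contradicting $G\in\LL$. The symmetric argument rules out $\RR$. Hence $G$ is in $\NN$ or $\PP$.

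The main obstacle is making the assumption explicit that the terminal rule is impartial, that is, symmetric in the player names. This hypothesis is exactly what fails in the paper's ending partizan subtraction nim. There the base case breaks: at a terminal position the winner is decided by the parity of the remaining tokens rather than by who is to move. This is why $\LL$ and $\RR$ positions can occur in that game.
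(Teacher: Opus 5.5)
Your proof is correct. The paper itself gives no proof of this statement. It quotes the theorem from its references, remarks that those sources tacitly assume normal or misère play, and then repairs the statement by redefining an impartial game as one with $f_L=f_R$ \emph{and} $W_L=W_R$ (Definition~\ref{our impartial}).

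With the paper's own tools, the natural proof goes through Proposition~\ref{outcomes}:
\begin{itemize}
\item Under $W_L=W_R$, every terminal position has $W(m)\in\{\NN,\PP\}$.
\item The recursion in Proposition~\ref{outcomes} preserves $\{\NN,\PP\}$, because the $\LL$ and $\RR$ cases each require an $\LL$ or $\RR$ option.
\item So a nonempty option set contained in $\{\NN,\PP\}$ yields $\PP$ if it equals $\{\NN\}$, and $\NN$ if it contains $\PP$.
\end{itemize}

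Your route differs. You prove the name-symmetry lemma (Left moving first wins iff Right moving first wins) directly from the rules. This avoids relying on the four-way case table and its exhaustiveness claim. The recursion route, in exchange, tells you which of $\NN$ and $\PP$ actually occurs.

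One refinement is worth making. Your base case only uses that, at each terminal position $m$, the verdict for the player to move does not depend on whether that player is Left or Right. That is, it uses $W_L(m)=W_R(m)$ pointwise. So you need not restrict to pure normal or pure misère play. The identical argument covers terminal rules that mix ``mover wins'' and ``mover loses'' position by position. That is exactly the generality of Definition~\ref{our impartial}.

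Your closing diagnosis is also accurate. In $\LL\RR$-Subtraction Nim the base case fails, because there $W(m)\in\{\LL,\RR\}$.
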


In this paper, we give examples of such rulesets that the condition of conventional definition for impartial games is satisfied but the statement of Theorem \ref{impartial outcome} fails by making the rule to decide the winner partizan, and we call these new games Ending Partizan Games (Definition \ref{Ending Partizan Game}).

For example, consider $\LL\RR$-Subtraction Nim; let the set of removable numbers $S$ be a non-empty subset of positive integers greater than or equal to $2$, which is applied for both players Left and Right. At the end of the game, Left wins if the number of remaining tokens is even, and Right wins if the number of remaining tokens is odd. This game satisfies conventional Definition for impartial games, but Theorem \ref{impartial outcome} fails.

In $\LL\RR$-Subtraction Nim with $S=\{2,3\}$, Left has a winning strategy regardless of who moves first when there are $5k$ or $5k+2$ tokens where $k\in\pZ$. In contrast, Right has a winning strategy only when there is $1$ token. In the case $S=\{2,3\}$, Left holds an advantage over Right (Proposition \ref{ex1}).

Do there exist sets for which Right has an advantage? The answer is NO. On the contrary, in the case $S=\{2,3,6\}$, Left has a winning strategy if there are more than or equal to $8$ tokens (Proposition \ref{ex2}). Moreover, it turns out that such cases, in which Left consistently dominates, are rather common, if not all. Although it may initially seem that the rule ---Left wins if the number of remaining tokens is even, and Right wins if the number of remaining tokens is odd--- ensures fairness, a closer analysis reveals that this is not so.

Theorem \ref{main theorem} explains why this phenomenon occurs. We prove that $n+1$ and $n-1$ are $\LL$-positions when $n$ is an $\RR$-position. We also prove that in a sequence of outcomes $\seq$, $\PP$-positions consecutively appear $\min(S) - 1$ at most and both sides of them are $\LL$-positions, when $\PP$-position appears. In addition, assuming that $S$ is finite, $\NN$-positions consecutively appear $\max(S) - 1$ at most and both sides of them are $\LL$-positions, when $\NN$-position appears. Only $\LL$-positions can consecutively appear forever. Professor Urban Larsson informed us that there exist references such as \citep{bib2}, indicating that such examples are not unprecedented; however, this is the only instance we have identified. This paper is the first example of Ending Partizan Subtraction Nim.

Unless otherwise stated, we follow  \citep{bib1} for notation and terminology.

\section{Ending Partizan Subtraction Nim}\label{sec2}
\subsection{Definition of First Example}\label{sub2}
\begin{definition}[Combinatorial Game]
    A Combinatorial Game is a quintuple $\gset$ where $M$ is a set and $f_L,f_R,W_L,W_R$ are maps as below, which satisfies the finiteness condition $(\maltese)$;
    \begin{itemize}
        \item $f_L,f_R:M\to 2^{M}$.
        \item $W_L:\{G\in M\mid f_L(G)=\varnothing\}\to \{\text{Win}, \text{Lose}\}$.
        \item $W_R:\{G\in M\mid f_R(G)=\varnothing\}\to \{\text{Win}, \text{Lose}\}$.
    \end{itemize}
    $(\maltese)$ There does NOT exist any INFINITE sequence;
    \begin{equation*}
        m_0,m_1,m_2,m_3,\dots
    \end{equation*}
    where for each $i\in\pZ, m_i \in M$ and $m_{i+1}\in f_L(m_i)\cup f_R(m_i)$.
\end{definition}

The game is played as follows; starting from the initial position $m_0$, say Left is the first player, Left chooses $m_1\in f_L(m_0)$, and Right chooses $m_2\in f_R(m_1)$ alternatingly, until the player cannot choose next move, say $f_R(m_{2n+1})=\varnothing$, and the winner is decided by $W_R(m_{2n+1})$, if $W_R(m_{2n+1})=\text{Win}$ then Right wins, and otherwise, Left wins. If the game ends with $f_L(m_{2n})=\varnothing$, we decide the winner similarly: If $W_L(m_{2n})=\text{Win}$, Left wins, and if $W_L(m_{2n})=\text{Lose}$, Right wins. By the finiteness condition $(\maltese)$, it is guaranteed that the game ends after finitely many steps. If $W_R$ and $W_L$ are constant maps to Lose, then the rule is the normal play, and if both are constant maps to Win, then it is the misère play.

We propose our definition of impartial game as follows so that Theorem \ref{impartial outcome} holds without any implicit assumptions.
\begin{definition}[Impartial game]\label{our impartial}
    A combinatorial game $\gset$ is impartial game if $f_L=f_R$ and $W_L=W_R$.
\end{definition}

\begin{definition}[Outcome]
    Let $\gset$ be a combinatorial game.

    For $m\in M$, $\OO(m)=\LL$ if Left has a winning strategy regardless of whether Left is the first player, starting from $m$. $\OO(m)=\RR$ if Right has a winning strategy from $m$. $\OO(m)=\NN$ if the next (namely the first) player has a winning strategy regardless of whether the next player is Left, starting from $m$. $\OO(m)=\PP$ if the previous (namely the second) player has a winning strategy from $m$.

    By abuse of notation, we also denote the subset $\OO^{-1}(\LL)\subset M$ as $\LL$ etc.
\end{definition}

\begin{definition}[Ending Partizan Game]\label{Ending Partizan Game}
    A combinatorial game $\gset$ is Ending Partizan Game if $f_L=f_R$. We represent an Ending Partizan Game as $(M, f, W)$, where $f=f_L(=f_R)$ and $W:\{m\in M\mid f(m)=\varnothing\}\to \{\LL,\RR,\NN,\PP\}$,
    \begin{equation*}
        W(m) =
        \left\{
            \begin{array}{ll}
                \LL & \text{if } W_L(m)=\text{Win and }W_R(m)=\text{Lose}, \\
                \RR & \text{if } W_L(m)=\text{Lose and }W_R(m)=\text{Win}, \\ 
                \PP &\text{if } W_L(m)=\text{Lose and }W_R(m)=\text{Lose},\\
                \NN & \text{if } W_L(m)=\text{Win and }W_R(m)=\text{Win}.
            \end{array}
        \right.
    \end{equation*}
\end{definition}

\begin{proposition}\label{outcomes}
    Let $(M, f, W)$ be an Ending Partizan Game.

    For $m\in M$ where $f(m)=\varnothing$, then 
    \begin{equation*}
        \OO(m) = W(m).
    \end{equation*}

    For $m\in M$ where $f(m)\neq \varnothing$, then
    \begin{equation*}
        \OO(m) =
        \left\{
            \begin{array}{ll}
                \LL & \text{if and only if } \{\LL\}\subset \OO(f(m))\subset\{\LL,\NN\}, \\
                \RR & \text{if and only if } \{\RR\}\subset \OO(f(m))\subset\{\RR,\NN\}, \\ 
                \PP & \text{if and only if } \{\NN\}=\OO(f(m)), \\
                \NN & \text{if and only if } (\{\PP\}\subset\OO(f(m)) \text{ or } \{\LL,\RR\}\subset\OO(f(m))).
            \end{array}
        \right.
    \end{equation*}
\end{proposition}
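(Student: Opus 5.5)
The plan is to reduce everything to two Boolean quantities per position and then translate back into outcome classes. For $m\in M$, let $a(m)\in\{\text{true},\text{false}\}$ say whether Left wins from $m$ when Left moves first, and $b(m)$ whether Right wins from $m$ when Right moves first. By definition, $\OO(m)=\LL$ exactly when $a(m)$ holds and $b(m)$ fails (Left wins moving first, and Left wins moving second, i.e.\ Right moving first loses). Likewise, $\OO(m)=\RR$ iff $\neg a(m)\wedge b(m)$, $\OO(m)=\NN$ iff $a(m)\wedge b(m)$, and $\OO(m)=\PP$ iff $\neg a(m)\wedge\neg b(m)$. This uses that every play is finite by $(\maltese)$, so from any position with a given first player exactly one side has a winning strategy (Zermelo). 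We would state that explicitly, proving it by induction along the well-founded relation $m'\in f(m)$; well-foundedness is exactly $(\maltese)$.

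\textbf{Terminal case.} If $f(m)=\varnothing$, a player to move from $m$ cannot move, and the rule decides via $W_L$ or $W_R$. So $a(m)$ holds iff $W_L(m)=\text{Win}$, and $b(m)$ holds iff $W_R(m)=\text{Win}$. Comparing with Definition \ref{Ending Partizan Game}, the four combinations give $\OO(m)=W(m)$ directly.

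\textbf{Non-terminal case.} If $f(m)\neq\varnothing$, both players have the same option set $f(m)$, since $f_L=f_R$. Left moving first wins iff some $m'\in f(m)$ is a win for Left when Right moves first from $m'$. This gives
\[ a(m)\iff \exists m'\in f(m):\ \neg b(m'), \qquad b(m)\iff \exists m'\in f(m):\ \neg a(m'). \]
Now $\neg b(m')$ means $\OO(m')\in\{\LL,\PP\}$, and $\neg a(m')$ means $\OO(m')\in\{\RR,\PP\}$. Writing $T=\OO(f(m))$, we get $a(m)\iff T\cap\{\LL,\PP\}\neq\varnothing$ and $b(m)\iff T\cap\{\RR,\PP\}\neq\varnothing$. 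We then read off the four cases:
\begin{itemize}
\item $\LL$ requires $T$ to meet $\{\LL,\PP\}$ but to avoid $\{\RR,\PP\}$. This means $\LL\in T\subset\{\LL,\NN\}$.
\item $\RR$ follows symmetrically, giving $\RR\in T\subset\{\RR,\NN\}$.
\item $\PP$ requires $T$ to avoid $\{\LL,\RR,\PP\}$. Since $T\neq\varnothing$, this means $T=\{\NN\}$.
\item $\NN$ requires $T$ to meet both sets. This happens iff $\PP\in T$ or $\{\LL,\RR\}\subset T$.
\end{itemize}

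No real obstacle is expected. The only point needing care is justifying the determinacy and the recursive characterization of $a$ and $b$ by well-founded induction from $(\maltese)$. The non-emptiness of $f(m)$ must also be used explicitly in the $\PP$ case, so that $T=\{\NN\}$ rather than $T=\varnothing$.
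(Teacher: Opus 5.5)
Your proof is correct and is essentially the paper's argument. The paper checks case by case that each of the four conditions on $\OO(f(m))$ forces the corresponding outcome, using the reasoning your recursion $a(m)\iff\exists m'\in f(m):\neg b(m')$ encodes, and then gets the converses from the fact that exactly one of the four conditions holds; your two-bit encoding gives all four biconditionals in one pass and makes explicit the determinacy from $(\maltese)$ that the paper uses tacitly (e.g.\ when it passes from ``the first player cannot win'' to ``the second player has a winning strategy'').
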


\begin{proof}
    The statement trivially holds for any $m \in M$ such that $f(m) = \emptyset$ by definition.

    Now, consider $m$ such that $f(m) \neq \varnothing$.

    \begin{itemize}
        \item ( $\{\LL\}\subset \OO(f(m))\subset\{\LL,\NN\}$ ) If Left moves first, there exists some $m' \in f(m)$ such that $m'\in \LL$ since $\{\LL\} \subset \OO(f(m))$. Thus, Left can win as the first player. If Right moves first, all possible moves lead to $\LL$-positions or $\NN$-positions since $\OO(f(m)) \subset \{\LL, \NN\}$. Therefore, Right cannot win as the first player. Hence, $m \in \LL$.
        
        We can similarly show that if $\{\RR\} \subset \OO(f(m)) \subset \{\RR, \NN\}$, then $m \in \RR$.

        \item ( $\OO(f(m)) = \{\NN\}$ ) In this case, $m' \in \NN$ for any $m' \in f(m)$. Therefore, the first player cannot win. This implies that the second player has a winning strategy, so $m \in \PP$.

        \item ( $\{\PP\} \subset \OO(f(m))$ ) In this case, there exists some $m' \in f(m)$ such that $m' \in \PP$. Thus, the first player can win by getting to $m'$. Therefore, $m \in \NN$.
        
        \item ( $\{\LL, \RR\} \subset \OO(f(m))$ ) There exists some $m' \in f(m)$ such that $m' \in \LL$. Thus, Left can win by getting to $m' \in \LL$. We can similarly show that Right can win as the first player. Therefore, $m \in \NN$.
    \end{itemize}
    Noticing that exactly one of the conditions holds, the argument above guarantees the converse.
\end{proof}

\begin{definition}[$\LL\RR$-Subtraction Nim]\label{Ending Partizan Subtraction Nim}
    Let $S$ be a non-empty subset of positive integers greater than or equal to $2$. The $\LL\RR$-Subtraction Nim with $S$ is given in the next conditions:
    \begin{itemize}
        \item $M= \pZ$.
        \item $f:M\to 2^{M}, m\mapsto \{m-s\mid s\in S \text{ and } m-s\ge 0\}.$
        \item $W:\{m\in M\mid f(m)=\varnothing \}\to \{\LL,\RR,\NN,\PP\}, m \mapsto
        \left\{
            \begin{array}{ll}
                \LL & \text{if } m \text{ is even}, \\
                \RR & \text{if } m \text{ is odd}. 
            \end{array}
        \right.$
    \end{itemize}
    We call $S$ a removable set.
\end{definition}

In what follows, we write $\OO_S(n)$ for the outcome of $n$ tokens in $\LL\RR$-Subtraction Nim with $S$ a set of removable numbers. From now on, we call the sequence $\seq$ an outcome sequence.
\begin{proposition}\label{ex1}
    In $\LL\RR$-Subtraction Nim with $S=\{2,3\}$, the outcomes are:
    \begin{equation*}
        \OO_S(n)=
        \left\{
            \begin{array}{ll}
                \LL & \text{if } n\in \{5m+2,5m\mid m\in\pZ\}, \\
                \RR & \text{if } n=1, \\
                \NN & \text{if } n\in \{5m+3,5m+4\mid m\in \pZ\},\\
                \PP & \text{if } n\in \{5m+1\mid m\in \oZ\}.
            \end{array}
        \right.
    \end{equation*}
\end{proposition}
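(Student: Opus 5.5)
Since $\OO_S(n)$ depends only on $\OO_S(n-2)$ and $\OO_S(n-3)$, I will prove the statement by strong induction on $n$, applying Proposition \ref{outcomes} to $f(n)=\{n-2,n-3\}\cap\pZ$. The base cases are the terminal positions. For $n=0$ and $n=1$ we have $f(n)=\varnothing$, so Proposition \ref{outcomes} gives $\OO_S(0)=W(0)=\LL$ and $\OO_S(1)=W(1)=\RR$. Next, $f(2)=\{0\}$ with outcome set $\{\LL\}$, so $\OO_S(2)=\LL$. Also $f(3)=\{1,0\}$ with outcome set $\{\RR,\LL\}\supset\{\LL,\RR\}$, so $\OO_S(3)=\NN$. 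Finally $f(4)=\{2,1\}$ again gives $\{\LL,\RR\}$, so $\OO_S(4)=\NN$. I will also compute $\OO_S(5)$ and $\OO_S(6)$ directly: $f(5)=\{3,2\}$ gives $\{\NN,\LL\}$, so $\OO_S(5)=\LL$; and $f(6)=\{4,3\}$ gives $\{\NN\}$, so $\OO_S(6)=\PP$. These values agree with the claimed formula for $n\le 6$. The position $n=1$ is the only exception to the periodic pattern, which is why $\PP$ is claimed only for $m\ge1$.

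For the inductive step I take $n\ge 7$, so that $n-2$ and $n-3$ are both at least $4$. Neither of them equals $1$, so by the induction hypothesis their outcomes follow the periodic pattern $(\LL,\PP,\LL,\NN,\NN)$ indexed by residues $0,1,2,3,4 \pmod 5$. I then check the five residues of $n$ modulo $5$ separately:
\begin{itemize}
\item If $n\equiv0$, then $n-2\equiv3$ and $n-3\equiv2$, giving outcomes $\{\NN,\LL\}$, so $\OO_S(n)=\LL$.
\item If $n\equiv1$, then $n-2\equiv4$ and $n-3\equiv3$, giving $\{\NN\}$, so $\OO_S(n)=\PP$.
\item If $n\equiv2$, then $n-2\equiv0$ and $n-3\equiv4$, giving $\{\LL,\NN\}$, so $\OO_S(n)=\LL$.
\item If $n\equiv3$, then $n-2\equiv1$ and $n-3\equiv0$, giving $\{\PP,\LL\}$, which contains $\PP$, so $\OO_S(n)=\NN$.
\item If $n\equiv4$, then $n-2\equiv2$ and $n-3\equiv1$, giving $\{\LL,\PP\}$, so $\OO_S(n)=\NN$.
\end{itemize}
Each case matches the claim, which closes the induction.

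The only real subtlety is the anomalous value $\OO_S(1)=\RR$. This position sits where the periodic pattern would place $\PP$, so it can only affect $n=3$ and $n=4$, which is exactly why those two cases are handled as base cases. The threshold $n\ge7$ is chosen so that the inductive step never references position $1$, and I will state this explicitly. Otherwise the proof is a routine finite table check.
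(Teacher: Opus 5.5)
Your proof is correct and follows essentially the same route as the paper: induction using Proposition~\ref{outcomes} on $f(n)=\{n-2,n-3\}\cap\pZ$, with the small positions computed directly and the general step checked residue by residue modulo $5$. Your remark that the anomalous $\OO_S(1)=\RR$ is an option only of $n=3,4$ states explicitly the point the paper handles with its base cases and its separate treatment of $n=6$.
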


We write the sequence of outcomes horizontally.

\begin{table}[h]
\caption{The outcome sequence of $\LL\RR$-Subtraction Nim with $S=\{2,3\}$}%
\begin{tabular}{@{}lllllllllllllllll@{}}
\toprule
$n$ & $0$ & $1$ & $2$ & $3$ & $4$ & $5$ & $6$ & $7$ & $8$ & $9$ & $10$ & $11$ & $12$ & $13$ & $14$ & $\dots$\\
\midrule
$\OO_S(n)$  & $\LL$ & $\RR$ & $\LL$ & $\NN$ & $\NN$ & $\LL$ & $\PP$ & $\LL$ & $\NN$ & $\NN$ & $\LL$ & $\PP$ & $\LL$ & $\NN$ & $\NN$ & $\dots$\\
\botrule
\end{tabular}
\end{table}

\begin{proof}
    We prove the Proposition by induction on $m$. When $m=0$, $0\in \LL$ and $1\in \RR$ follow from the definition of the game. For the case of $2$ tokens, any move by the first player leads to $0$ token. Therefore $2\in \LL$. For the case of $3$ tokens, if Left moves first, Left can win by taking $3$ tokens; if Right moves first, Right can win by taking $2$ tokens. Therefore, $3\in \NN$. For the case of $4$ tokens, if Left moves first, Left can win by taking $2$ tokens; if Right moves first, Right can win by taking $3$ tokens. Therefore, $4\in \NN$. When $m\ge 1$, we may proceed by induction on $m$, assuming that Proposition \ref{ex1} holds for smaller $m$ except for $6$. As $f(6)=\{3, 4\}\subset \NN, 6\in \PP$.
    
    \begin{align*}
        \{\LL\}\subset f(5m) &= \{5(m-1)+2, 5(m-1)+3\}\subset \{\LL, \NN\}. \\
        f(5m+1) &= \{5(m-1)+3, 5(m-1)+4\}= \{ \NN\}. \\
        \{\LL\} \subset f(5m+2) &= \{5(m-1)+4, 5m\}\subset \{\LL, \NN\}.\\
        f(5m+3) &\ni 5m+1 \text{ with } \OO_S(5m+1)=\PP. \\
        f(5m+4) &\ni 5m+1 \text{ with } \OO_S(5m+1)=\PP. 
    \end{align*}
\end{proof}
\begin{proposition}\label{ex2}
    In $\LL\RR$-Subtraction Nim with $S=\{2,3,6\}$, the outcomes are:
    \begin{equation*}
        \OO_S(n)=
        \left\{
            \begin{array}{ll}
                \LL & \text{if } n=0,2,5,6 \text{ or } n\ge 8, \\
                \RR & \text{if } n=1, \\
                \NN & \text{if } n=3,4,7.
            \end{array}
        \right.
    \end{equation*}
\end{proposition}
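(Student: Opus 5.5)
We will argue as in the proof of Proposition \ref{ex1}: compute the small values of $\OO_S(n)$ one at a time using Proposition \ref{outcomes}, then show that the tail $n\ge 8$ consists only of $\LL$-positions by induction on $n$. Here $f(n)=\{n-2,n-3,n-6\}\cap\pZ$.

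\textbf{Initial values.} The positions $n=0,1$ are terminal, so $\OO_S(0)=\LL$ and $\OO_S(1)=\RR$ by the definition of $W$. The remaining small values follow from Proposition \ref{outcomes}:
\begin{itemize}
\item $f(2)=\{0\}$ gives $2\in\LL$.
\item $f(3)=\{1,0\}$ has outcomes $\{\RR,\LL\}$, so $3\in\NN$.
\item $f(4)=\{2,1\}$ has outcomes $\{\LL,\RR\}$, so $4\in\NN$.
\item $f(5)=\{3,2\}$ has outcomes $\{\NN,\LL\}$, so $5\in\LL$.
\item $f(6)=\{4,3,0\}$ has outcomes $\{\NN,\NN,\LL\}$, so $6\in\LL$.
\item $f(7)=\{5,4,1\}$ has outcomes $\{\LL,\NN,\RR\}$; it contains both $\LL$ and $\RR$, so $7\in\NN$.
\end{itemize}

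\textbf{Base cases for the tail.} We check $n=8,\dots,13$ directly, since for these $f(n)$ can still reach small positions:
\begin{itemize}
\item $f(8)=\{6,5,2\}$, all $\LL$, so $8\in\LL$.
\item $f(9)=\{7,6,3\}$ has outcomes $\{\NN,\LL,\NN\}$, so $9\in\LL$.
\item $f(10)=\{8,7,4\}$ has outcomes $\{\LL,\NN,\NN\}$, so $10\in\LL$.
\item $f(11)=\{9,8,5\}$, all $\LL$, so $11\in\LL$.
\item $f(12)=\{10,9,6\}$, all $\LL$, so $12\in\LL$.
\item $f(13)=\{11,10,7\}$ has outcomes $\{\LL,\LL,\NN\}$, so $13\in\LL$.
\end{itemize}

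\textbf{Induction step.} For $n\ge 14$, every element of $f(n)=\{n-2,n-3,n-6\}$ is at least $8$. By the induction hypothesis all three are $\LL$-positions, so $\{\LL\}=\OO_S(f(n))$ and $n\in\LL$ by Proposition \ref{outcomes}.

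The only delicate point is the transition range $8\le n\le 13$. There the option $n-6$ (and for $n=9,10$ also $n-3$) can land on one of the $\NN$-positions $3,4,7$. We must confirm that no option ever lands on the $\RR$-position $1$, since an $\RR$ option together with an $\LL$ option would force $\NN$. The smallest option for $n\ge 8$ is $8-6=2$, so $1$ is never reached, and the explicit checks above cover the remaining cases.
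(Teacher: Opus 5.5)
Your proof is correct and follows essentially the same route as the paper: you compute $\OO_S(n)$ for $n\le 7$ directly from Proposition~\ref{outcomes}, then for $n\ge 8$ you use that every option is at least $2$ (so never the $\RR$-position $1$) together with the existence of an $\LL$-option, which for $n\ge 14$ is $n-6\ge 8$; the paper only names one $\LL$-option for each $8\le n\le 13$, whereas you list all three. The one slip is in your closing remark, where for $n=9$ the second $\NN$-option is $n-2=7$ rather than $n-3=6$, and this does not affect your explicit computations.
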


The sequence of outcomes is below,

\begin{table}[h]
\caption{The outcome sequence of $\LL\RR$-Subtraction Nim with $S=\{2,3,6\}$}%
\begin{tabular}{@{}lllllllllllllllll@{}}
\toprule
$n$ & $0$ & $1$ & $2$ & $3$ & $4$ & $5$ & $6$ & $7$ & $8$ & $9$ & $10$ & $11$ & $12$ & $13$ & $14$ & $\dots$\\
\midrule
$\OO_S(n)$  & $\LL$ & $\RR$ & $\LL$ & $\NN$ & $\NN$ & $\LL$ & $\LL$ & $\NN$ & $\LL$ & $\LL$ & $\LL$ & $\LL$ & $\LL$ & $\LL$ & $\LL$ &  $\dots$ \\
\bottomrule
\end{tabular}
\end{table}

\begin{proof}
    We begin by calculating the outcome $\OO_S(n)$ for all integers $n\in \{0,1,\dots, 7\}$. For $n\le 5$, the outcomes are the same as those in Proposition \ref{ex1}. For the case of $n=6$, Left can win from $n$ by moving to $n-6=0\in \LL$. Any moves by Right lead to $n-2=4\in \NN$, $n-3=3\in \NN$ and $n-6=0\in \LL$ and Right loses, hence $6\in \LL$. Consider the case $n=7$. Left can win from $n$ by moving to $n-2=5\in \LL$. Right can win from $n$ by moving to $n-6=1\in \LL$. Therefore, $7\in\NN$.

    Next, suppose $n\ge 8$. Any moves from $n$ leads to $\LL$-positions or $\NN$-positions, so $n\in \LL$ if there exists at least one $\LL$-position among its options based on Proposition \ref{outcomes}.

    \[8\to 2\in \LL, 9\to 6\in \LL, 10\to 8\in \LL, 11\to 5\in \LL, 12\to 6\in \LL, 13\to 10\in \LL\]
    and $n-6\in \LL$ where $n\ge 14$. Thus $\OO_S(n)=\LL$.
\end{proof}
\subsection{Main Result}
We computed the outcomes for many $S$ (to be precise, $116794921$ cases out of $134217728$, almost $87\%$, examples of $S$ with the minimum of $S$ just $2$ and the maximum of $S$ less than or equal to $29$, also $131500955$ cases out of $134217728$, almost $98\%$, examples of $S$ with the minimum of $S$ just $3$ and the maximum of $S$ less than or equal to $30$ ), and found that the outcomes are $\LL$-positions for all large enough $n$. 

Theorem \ref{main theorem} and Lemma \ref{main lem} explain why these phenomena occur.

\begin{theorem}\label{main theorem}
    In $\LL\RR$-Subtraction Nim with $S\subset \mathbb{Z}_{\ge2}$, the following statements hold:
    \begin{enumerate}
        \item[(1)] If Right has a winning strategy from $n$, Left has a winning strategy from $n\pm 1$. That is,
        \begin{equation*}
            \OO_S(n)=\RR \implies \OO_S(n-1)=\OO_S(n+1)=\LL.
        \end{equation*}
        \item[(2)] If the previous player has a winning strategy in $n$, Left can win as the second player in $n\pm 1$. That is,
        \begin{equation*}
            \OO_S(n)=\PP \implies \OO_S(n-1), \OO_S(n+1)\in \{\LL,\PP\}.
        \end{equation*}
        \item[(3)] If the next player has a winning strategy in $n$, Left can win as the first player in $n\pm 1$. That is,
        \begin{equation*}
            \OO_S(n)=\NN \implies \OO_S(n-1), \OO_S(n+1) \in \{\LL,\NN\}.
        \end{equation*}
        \item[(4)] When $\PP$-position appears in the sequence of outcomes $\seq$, $\PP$-positions consecutively appear at most $\min(S)-1$ times and both sides of them are $\LL$-positions.
        \item[(5)] When $\NN$-position appears in the sequence of outcomes $\seq$, $\NN$-positions consecutively appear and the left side is $\LL$-position. Moreover, the right side is $\LL$-position when $\NN$-positions do not last forever. In particular, if $S$ is a finite set, $\NN$-positions consecutively appear at most $\max(S)-1$ times and both sides of them are $\LL$-positions.
    \end{enumerate}
\end{theorem}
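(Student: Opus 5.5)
The plan is to derive (1)–(3) from a single ``shift by one'' lemma, and then obtain (4) and (5) by a counting argument on the options. Write $a=\min(S)$, and note that $m$ is terminal iff $m<a$.

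\textbf{Key Lemma.} For every $m\in\pZ$, and for each role (moving first or moving second), the following two implications hold:
\begin{itemize}
\item[(i)] if Right wins in that role from $m$, then Left wins in the same role from $m+1$;
\item[(ii)] if Right wins in that role from $m+1$, then Left wins in the same role from $m$.
\end{itemize}
Given this, (1)–(3) are just a rewording. For instance, $\OO_S(n)=\PP$ means in particular that Right wins as second player from $n$, so Left wins as second player from $n\pm1$, i.e.\ $\OO_S(n\pm1)\in\{\LL,\PP\}$. Similarly $\RR$ gives both roles and hence $\LL$, and $\NN$ gives the first-player role and hence $\{\LL,\NN\}$.

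\textbf{Proof of the Lemma.} I would prove (i) and (ii) simultaneously by strong induction on $m$. The basic observation is that whenever $m-s\ge 0$ we also have $m+1-s\ge0$. Hence every move by $s$ at $m$ has a twin move by the same $s$ at $m+1$, landing on the adjacent pair $(m-s,\,m+1-s)$, to which the induction hypothesis applies.

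\emph{Case (i), first player.} Right wins first from $m$ by some move to $m-s$ from which Right wins as second. Left, moving first at $m+1$, plays to $m+1-s$. By induction (i), Left wins as second there.

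\emph{Case (i), second player.} Right wins second from $m$. Suppose Right, moving first at $m+1$, moves by $s$. If $s\le m$, then $m-s$ is a Left option at $m$, so Right wins as first from $m-s$, and by induction Left wins as first from $m+1-s$. If $s=m+1$, the game lands on $0$, which is terminal and even, so Left wins.

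\emph{Case (ii).} This is the mirror image. Every move $m-s$ at $m$ is matched by $m+1-s$ at $m+1$, and this time no extra move arises.

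\emph{Terminal cases.} These are the main fiddly point.
\begin{itemize}
\item If both $m$ and $m+1$ are terminal, their parities differ, and a Right win means the odd one, so the other is an even terminal and a Left win.
\item If $m$ is terminal and $m+1$ is not, then $m+1=a$, whose only option is $0$. Thus $m+1$ is $\LL$: this gives the conclusion of (i), and makes the hypothesis of (ii) false.
\end{itemize}
Getting these boundary cases and the extra move $s=m+1$ right is the only real obstacle in the argument.

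\textbf{(4).} Take a maximal run $n,\dots,n+k-1$ of $\PP$-positions. Since $0\in\LL$ and $1\notin\PP$, we have $n\ge1$ (indeed $n\ge a$), and by (2) the position $n-1$ lies in $\{\LL,\PP\}$. By maximality it is $\LL$.

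Now suppose $k\ge a$. The position $m=n+a-1$ is a $\PP$-position, so it is non-terminal and all its options must be $\NN$. But $m-a=n-1$ is an option of $m$ and is $\LL$, a contradiction. Hence $k\le a-1$. Since the run is finite, $n+k$ exists and lies in $\{\LL,\PP\}$ by (2), so it is $\LL$.

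\textbf{(5).} Take a maximal run of $\NN$-positions starting at $n$. Its left neighbour $n-1$ exists because $0\in\LL$, and by (3) it is $\LL$. If the run ends, its right neighbour is likewise $\LL$ by (3).

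For finite $S$ put $b=\max(S)$, and suppose $n,\dots,n+b-1$ are all $\NN$. Let $m=n+b-1$. Its options $m-s$ lie in $[n-1,\,n+b-3]$, so they are all $\NN$ except possibly $n-1$, which is $\LL$. By Proposition \ref{outcomes}, an $\NN$-position needs a $\PP$-option or both an $\LL$-option and an $\RR$-option, and $m$ has neither. This contradiction shows that the run has length at most $b-1$.
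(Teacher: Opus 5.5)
Your Key Lemma is the paper's Lemma~\ref{main lem}: your (i) and (ii) are its $n+1$ and $n-1$ directions, proved by the same strategy-stealing induction. The paper organizes the induction by role rather than by direction of the shift, but the content is the same.

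\textbf{One step fails as written.} In Case (ii) you say ``no extra move arises.'' That is true only for the second-player role, where each of Right's moves at $m$ has a twin at $m+1$.

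In the first-player role the copying runs the other way. Right has a winning first move at $m+1$, say by $s$, and Left must copy it at $m$. This is impossible if $s=m+1$, so you must rule out that Right's winning move is to take all $m+1$ tokens.

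The fix is one line: that move lands on $0$, and $\OO_S(0)=\LL$, so Right cannot win as second player there; hence $s\le m$. This is exactly the remark ``since $\OO_S(0)=\LL$, it follows that $s_1<n$'' in the paper's proof. It is the same boundary issue you flagged as the main obstacle, just appearing in the other case. With it added, the lemma and hence (1)--(3) are complete.

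\textbf{(4) and (5).} Your arguments are correct but differ slightly from the paper's.

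The paper argues forward from an arbitrary window:
\begin{itemize}
\item For (4), a $\PP$ at $n$ makes $n+\min(S)$ an $\NN$-position, so by (3) $n+\min(S)-1$ is not $\PP$.
\item For (5), $\NN$'s at $n+1,\dots,n+\max(S)-1$ force $n+\max(S)+1$ to be $\PP$, so by (2) $n+\max(S)$ is not $\NN$.
\end{itemize}

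You instead first show that the left neighbour $n-1$ of a maximal run is $\LL$. For (4), you then note that $n-1$ is an option of $n+\min(S)-1$. For (5), you note that all options of $n+\max(S)-1$ lie in $[n-1,n+\max(S)-3]$. In each case Proposition~\ref{outcomes} then rules out $\PP$ (respectively $\NN$) at that position.

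Your route reuses boundary information the statement requires anyway and gets the length bound from Proposition~\ref{outcomes} alone. It also disposes of a potentially infinite $\PP$-run without separate comment. The paper's route needs no maximality and bounds every window directly.
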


To prove this Theorem, we first prepare the following Proposition. The basic idea to prove Theorem \ref{main theorem} is a strategy stealing, as follows: If Right can win from $n$ to finish at an odd number $k<\min(S)$, then Left can win from $n\pm 1$ to finish at the even numbers $k\pm 1$ by the same strategy, except for three exceptional cases, namely $k=0$, $k=\min(S)$, and $n+1\in S$.
\begin{lemma}\label{main lem}
    In $\LL\RR$-Subtraction Nim with $S\subset \mathbb{Z}_{\ge2}$, the following holds:
        \begin{enumerate}
            \item[(1)] When Right can win as the first player from $n$, Left can win as the first player from $n\pm1$. In other words, 
            \begin{equation*}
                \OO_S(n)\in\{\RR,\NN\} \implies \OO_S(n-1), \OO_S(n+1)\in\{\LL,\NN\}.
            \end{equation*}
            \item[(2)] When Right can win as the second player from $n$, Left can win as the second player from $n\pm1$. In other words, 
            \begin{equation*}
                \OO_S(n)\in\{\RR,\PP\} \implies \OO_S(n-1), \OO_S(n+1)\in\{\LL,\PP\}.
            \end{equation*}
        \end{enumerate}
\end{lemma}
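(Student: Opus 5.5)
We prove both statements simultaneously by strong induction on $n$. The approach is the strategy stealing described before the lemma: whatever Right does from $n$, Left copies from $n\pm1$. Put $a=\min(S)\ge 2$, so that a position $m$ is terminal exactly when $m<a$. We encode the four relevant facts as predicates: $RF(n)$ means Right wins from $n$ moving first, $RS(n)$ means Right wins from $n$ moving second, and $LF(n)$, $LS(n)$ are defined likewise for Left. Then $\OO_S(n)\in\{\RR,\NN\}$ is $RF(n)$, and $\OO_S(n)\in\{\RR,\PP\}$ is $RS(n)$, and similarly for Left.

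Unwinding Proposition \ref{outcomes} gives the following recursions. For terminal $n$, $RF(n)$ and $RS(n)$ hold iff $n$ is odd, and $LF(n)$ and $LS(n)$ hold iff $n$ is even. For non-terminal $n$:
\[RF(n)\iff \exists s\in S,\ n-s\ge 0:\ RS(n-s),\]
\[RS(n)\iff \forall s\in S,\ n-s\ge 0:\ RF(n-s),\]
and the same two equivalences hold with $L$ in place of $R$. Under these recursions, statement (1) at $n$ only calls on (2) at smaller arguments, and (2) at $n$ only calls on (1) at smaller arguments. So strong induction on $n$ is well-founded.

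\emph{Statement (1).} Assume $RF(n)$. If $n$ is terminal, then $n$ is odd, so $n\ge 1$. The position $n-1$ is even and terminal, so $LF(n-1)$ holds. For $n+1$ there are two cases. If $n+1<a$, it is even and terminal, so $LF(n+1)$ holds. Otherwise $n+1=a$, and Left moves to $0$, which is an even terminal position; hence $LS(0)$ and so $LF(n+1)$.

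If $n$ is non-terminal, pick $s\in S$ with $RS(n-s)$. We have $n-s\neq 0$, because $0$ is even and terminal, so $RS(0)$ fails. Hence $n-s\ge 1$, and both $(n\pm1)-s$ are legal options. The induction hypothesis (2), applied to $n-s<n$, gives $LS(n-s\pm1)$. Therefore Left, moving first from $n\pm1$, removes $s$ and wins.

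\emph{Statement (2).} Assume $RS(n)$. If $n$ is terminal, $n$ is odd, so $LS(n-1)$ holds since $n-1$ is even and terminal. For $n+1$: if it is terminal it is even and $LS$ holds. Otherwise $n+1=a$, and Right's only move from $a$ is to $0$, where $LF(0)$ holds; so $LS(n+1)$.

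If $n$ is non-terminal, first note $n\neq a$. Indeed, the only option of $a$ is $0$, and $RF(0)$ fails, so $RS(a)$ would fail. Thus $n\ge a+1$, and both $n\pm1$ are non-terminal. We must show every option of $n\pm1$ satisfies $LF$.
\begin{itemize}
\item An option $n+1-s$ with $n-s\ge 0$: we have $RF(n-s)$, and the induction hypothesis (1) gives $LF(n-s+1)$.
\item The option with $s=n+1$: this option is $0$, and $LF(0)$ holds.
\item An option $n-1-s\ge 0$: then $n-s\ge 1$ is a legal option of $n$, so $RF(n-s)$ holds, and induction (1) gives $LF(n-s-1)$.
\end{itemize}

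The main obstacle is the boundary bookkeeping, namely the three exceptional situations mentioned before the lemma: $k=0$, $k=\min(S)$, and $n+1\in S$. In each of them the copied move either does not exist or lands on a terminal position of the wrong parity. Each is handled by the facts used above: $0$ is an $\LL$-position, $RS(a)$ fails, and the option $0$ of $n+1$ is harmless.
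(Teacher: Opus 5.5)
Your proof is correct and follows essentially the same route as the paper. Both use simultaneous induction with terminal $n<\min(S)$ as the base, including the special case $n+1=\min(S)$. For (1) you copy Right's winning move $s$; for (2) you apply (1) to every option and treat the extra option $0$ separately when $n+1\in S$. Your explicit check that Right cannot win as second player from $\min(S)$, which guarantees that $n-1$ is non-terminal in the inductive step of (2), makes precise a boundary case the paper's proof passes over silently.
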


\begin{proof}
    We prove both $(1)$ and $(2)$ simultaneously by induction on $n$. Let $s = \min(S)$.

    Consider the case $n<s$. When Right can win from $n<s$, then $n$ is odd and $n+1$ and $n-1$ are even, so $\OO_S(n+1)=\OO_S(n-1)=\LL$ by the definition of $\LL\RR$-Subtraction Nim except for the case $n=s-1$. However, when $n=s-1$, regardless of whether Left or Right moves first, the only option in $s$ is to take $s$ tokens, which results in a win for Left, so $\OO_S(n+1)=\LL$. Therefore, both $(1)$ and $(2)$ hold for $n<s$.

    Now we assume that both $(1)$ and $(2)$ hold for all non-negative integers less than or equal to $n-1$.

    For proof of $(1)$, suppose that in the position with $n$ tokens, Right can win as the first player. There exists $s_1\in S$ such that Right can win as the second player in $n-s_1$ tokens. Since $\OO_S(n-s_1)\in\{\RR,\PP\}$ and $\OO_S(0)=\LL$, it follows that $s_1 < n$, and hence $s_1\le n-1$. By the inductive hypothesis $(2)$, Left can win as the second player in $(n+1)-s_1$ and $(n-1)-s_1$, so in the position with $n+1$ (and $n-1$) tokens, Left can win as the first player by taking $s_1\in S$ tokens.

    For proof of $(2)$, suppose that in the position with $n$ tokens, Right can win as the second player. Then, for any $s_2\in S$ with $s_2\le n$, Right can win as the first player in $n-s_2$. By the inductive hypothesis $(1)$, Left can win as the first player in $(n+1)-s_2$ and $(n-1)-s_2$, so even if Right takes $s_2$ tokens from $n\pm 1$, Left can still win as the second player. The only remaining case we need to consider is the case when $n+1\in S$, we have to consider the situation in which Right takes all $n+1$ tokens from $n+1$. Since the result with $0$ token is win for Left, which proves that Left wins as the second player from $n+1$ and $n-1$.

    Hence, the inductive step is complete, and the Proposition is established.
\end{proof}

\begin{proof}[Proof of Theorem \ref{main theorem}]
    Regarding $(1)$, assuming that $\OO_S(n)=\RR$, then $\OO_S(n+1), \OO_S(n-1)\in\{\LL,\NN\}$ and $\OO_S(n+1), \OO_S(n-1)\in \{\LL,\PP\}$ by Lemma \ref{main lem} ($1$) and $(2)$. Thus $\OO_S(n+1), \OO_S(n-1)\in \LL$. $(2)$ and $(3)$ are special cases of Lemma \ref{main lem} $(1)$ and $(2)$. Regarding $(4)$, by $(2)$, we need only to show that $\PP$-positions appear consecutively at most $\min(S)-1$ times. When $\OO_S(n)=\PP$, then we can play from $n+\min(S)$ to $n$, so Proposition \ref{outcomes} implies that $\OO_S(n+\min(S))=\NN$, then by $(3)$, $\OO_S(n+\min(S)-1)\neq \PP$, hence consecutive appearance of $\PP$ is at most $\min(S)-1$. Regarding $(5)$, by $(3)$, we need only to show that $\NN$-positions appear consecutively at most $\max(S)-1$ times if $S$ is a finite set. Assume that $S$ is a finite set. If $\OO_S(n+1)=\dots =\OO_S(n+\max(S)-1)=\NN$, then any moves from $n+\max(S)+1$ lead to $\NN$-positions, so Proposition \ref{outcomes} implies that $\OO_S(n+\max(S)+1)=\PP$, then by $(2)$, $\OO_S(n+\min(S))\neq \NN$, hence consecutive appearance of $\NN$ is at most $\max(S)-1$.
\end{proof}

\begin{remark}
    Proposition \ref{main lem} does not hold if the roles of Left and Right are interchanged; this asymmetry is the reason for the bias toward Left. 
    
    For example, let $S=\{2,3\}$. As an example like (1), consider the case where $\OO_S(3)=\NN$. In this situation, Left wins if she is the first player. Furthermore, since $\OO_S(2)=\LL$, Left also wins when moving first. This occurs because Right is unable to remove three tokens from a heap that only contains two.

    As an example like (2), consider $\OO_S(2)=\LL$. In this case, Left has a winning strategy as the second player. However, since $\OO_S(3)=\NN$, Right cannot win as the second player. This is because Left can remove three tokens, leading to Right's defeat.
\end{remark}
\begin{remark}
    In Theorem \ref{main theorem} $(5)$, the finiteness assumption is essential. Suppose $S=\ZZ_{\ge 2}$. For any $n\in \ZZ_{\ge 3}$, from $n$ tokens, Left can win as the first player by taking $n$ tokens and Right can win as the first player by taking $n-1$ tokens, so $\OO_S(n)=\NN$ and $\NN$-positions last forever.
\end{remark}
Using Theorem \ref{main theorem}, in $\LL\RR$-Subtraction Nim with a finite $S$, the following statements on a sequence of outcomes $\seq$ hold:
\begin{itemize}
    \item If $\RR$-position appears, the both sides of it are $\LL$-positions. 
    \item $\PP$-positions appear consecutively at most $\min(S)-1$, and both sides of them are $\LL$-positions.
    \item $\NN$-positions appear consecutively at most $\max(S)-1$, and both sides of them are $\LL$-positions.
    \item No such restrictions for $\LL$-position.
\end{itemize}
Therefore, $\LL$-positions appear much more often than $\RR$-positions do. See Proposition \ref{2-sym} for more precise statement.

\section{Symmetric removable sets}
In this section, except for Proposition \ref{LRs1} and \ref{2-sym} and their corollaries, we consider general Ending Partizan Subtraction Nim without assuming $\LL\RR$-Subtraction Nim.

\begin{definition}[$p$-symmetric]
    Let $p\in\oZ$. Non-empty subset $S\subset \oZ$ is $p$-symmetric if for all $s\in S$, $p-s\in S$.
\end{definition}
Notice that $S$ is finite when $S$ is $p$-symmetric.

\begin{definition}[periodic]
    A sequence of outcomes $\oseq$ is periodic when there exist $A\in\pZ$ and $p\in \pZ$ such that for all $m\in \pZ$ with $m\ge A$, the equality $\OO_S(m+p)=\OO_S(m)$ holds. We say that $\seq$ is periodic with a period $p$. We call $\seq$ is purely periodic when $A=0$. Notice that if $p$ is a period, multiples of $p$ are also periods. In particular, our period is not necessarily the minimum period.
\end{definition}
\begin{proposition}
    In Ending Partizan Subtraction Nim, the sequence $\seq$ has a finite period if $S$ is finite.
\end{proposition}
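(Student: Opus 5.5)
The plan is the standard pigeonhole argument on windows of consecutive outcomes. Let $m=\max(S)$. In a general Ending Partizan Subtraction Nim, $f(n)=\{n-s\mid s\in S,\ n-s\ge 0\}$ and $W$ is some fixed assignment on the terminal positions. Since $S$ is finite, every $n\ge m$ has nonempty $f(n)=\{n-s\mid s\in S\}$. Hence for $n\ge m$ the position $n$ is not terminal, and by Proposition \ref{outcomes} its outcome $\OO_S(n)$ is a fixed function $\Phi$ of the tuple $(\OO_S(n-s))_{s\in S}$. The function $\Phi$ does not depend on $n$, because Proposition \ref{outcomes} expresses the outcome purely through the set $\OO(f(n))$.

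Next consider, for $n\ge m$, the window
\begin{equation*}
w_n=(\OO_S(n-m),\OO_S(n-m+1),\dots,\OO_S(n-1))\in\{\LL,\RR,\NN,\PP\}^{m}.
\end{equation*}
Every $n-s$ with $s\in S$ lies in this window, so $\OO_S(n)=\Phi(w_n)$. Consequently $w_{n+1}$ is determined by $w_n$: shift the window by one and append $\Phi(w_n)$. So there is a map $T$ on the finite set $\{\LL,\RR,\NN,\PP\}^{m}$ with $w_{n+1}=T(w_n)$ for all $n\ge m$.

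Since there are only $4^{m}$ windows, the pigeonhole principle gives $m\le a<b\le m+4^{m}$ with $w_a=w_b$. Put $p=b-a\ge 1$. Induction on $k$ using $w_{n+1}=T(w_n)$ gives $w_{a+k}=w_{b+k}$ for all $k\ge 0$. Reading off the last coordinate gives $\OO_S(n+p)=\OO_S(n)$ for all $n\ge a-1$, so we may take $A=a$ in the definition of periodic.

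The only step needing care is the first. One must check that Proposition \ref{outcomes} really makes $\OO_S(n)$ depend only on the multiset of option outcomes, and that no terminal rule interferes once $n\ge m$. Both hold because nonterminality is uniform for $n\ge\max(S)$. The argument never uses the specific $\LL\RR$ terminal rule, so it covers the general Ending Partizan setting.
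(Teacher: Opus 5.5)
Your proof is correct and follows essentially the same pigeonhole argument on windows of $\max(S)$ consecutive outcomes as the paper: the paper finds $x<y$ with $x\ge\max(S)$ whose preceding windows agree, then propagates $\OO_S(x+k)=\OO_S(y+k)$ by induction using Proposition~\ref{outcomes}. Your transition map $T$ on $\{\LL,\RR,\NN,\PP\}^{m}$ and the explicit bound $b\le m+4^{m}$ are only a cleaner phrasing of that same step.
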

\begin{proof}
    Assume $S = \{s_1, s_2, \dots, s_n\}$ where $s_1 < s_2 < \dots < s_n$. Since there are at most four possible outcomes for each position, the number of combinations of $a_n$ consecutive outcomes that can appear in the sequence of outcomes is finite. Therefore, by the Pigeonhole Principle, there exist positive integers $x$ and $y$ such that $s_n \le x < y$ and \[\OO_S(x-s_n)=\OO_S(y-s_n), \OO_S(x-s_n+1)=\OO_S(y-s_n+1),\dots, \OO_S(x-1)=\OO_S(y-1).\] The outcomes of the position obtained from the position with $x$ tokens is identical to that of the position obtained from the position with $y$ tokens, so it follows that $\OO_S(x) = \OO_S(y)$. Suppose that $\OO_S(x+k)=\OO_S(y+k)$ holds for all $k<t$. Then, by the induction hypothesis, $\OO_S(x+t-s_i)=\OO_S(y+t-s_i)$ holds for any $s_i\in S$. This implies that $\OO_S(x+t)=\OO_S(y+t)$. Thus, it is shown that the sequence of outcomes is periodic.
\end{proof}

\begin{proposition}\label{key lemma for p-sym}
    In Ending Partizan Subtraction Nim with a $p$-symmetric $S\subset \ZZ_{\ge 2}$, the following statements hold:
    \begin{enumerate}
        \item[(1)] $\OO_S(m)=\LL \implies \OO_S(m+p)\in \{\LL,\PP\}$.
        \item[(2)] $\OO_S(m)=\RR \implies \OO_S(m+p)\in \{\RR,\PP\}$.
        \item[(3)] $\OO_S(m)=\PP \implies \OO_S(m+p)=\PP$.
        \item[(4)] $\OO_S(m+p)=\NN \implies \OO_S(m)=\NN$.
    \end{enumerate}
\end{proposition}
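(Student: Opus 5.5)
The plan is a \emph{mirror (Tweedledum--Tweedledee) strategy}. It reduces all four statements to one observation: from $m+p$, the second player can force the play to reach $m$ with the original first player again to move. I state this as an auxiliary claim, derive (1)--(3) from it, and obtain (4) as a contrapositive.

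\textbf{Key claim.} For every $m\in\pZ$ and either player $X$ moving first from $m+p$, the opponent $Y$ has a strategy guaranteeing that, after exactly two moves, the position is $m$ with $X$ to move. The strategy is: if $X$ removes $s\in S$, then $Y$ removes $p-s$, which lies in $S$ by $p$-symmetry.

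\textbf{Checking the claim.} I must confirm that every step of the mirror strategy is legal and that the game cannot end early.
\begin{itemize}
\item \emph{$X$ has a move at $m+p$.} Take any $s\in S$; then $p-s\in S$, and since $S\subset\ZZ_{\ge 2}$ we get $p=s+(p-s)\ge s+2$. Hence $m+p\ge \min(S)$, so $f(m+p)\neq\varnothing$. The terminal rule $W$ is therefore never applied at $m+p$.
\item \emph{$Y$'s reply is legal.} After $X$ removes $s$, the heap has $m+p-s\ge p-s$ tokens, so removing $p-s$ is allowed. Again $f(m+p-s)\neq\varnothing$, so the game does not terminate in between.
\end{itemize}
The two moves together remove exactly $p$ tokens, and it is $X$'s turn again.

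\textbf{Consequence.} Combining the mirror moves with $Y$'s winning strategy at $m$ gives the following: if $Y$ wins from $m$ as second player, then $Y$ wins from $m+p$ as second player. Because this is an explicit strategy concatenation, no induction on $m$ is needed.

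\textbf{Deriving (1)--(4).}
\begin{itemize}
\item \textbf{(1)} If $\OO_S(m)=\LL$, Left wins from $m$ as second player. By the claim, Left wins from $m+p$ as second player. This means $\OO_S(m+p)\in\{\LL,\PP\}$, since these are exactly the outcomes in which the second player wins when Left is second.
\item \textbf{(2)} This is the same argument with the roles of Left and Right exchanged.
\item \textbf{(3)} If $\OO_S(m)=\PP$, both Left and Right win from $m$ as second player. Applying the claim to each gives $\OO_S(m+p)=\PP$.
\item \textbf{(4)} Argue by contrapositive. If $\OO_S(m)\ne\NN$, then $\OO_S(m)\in\{\LL,\RR,\PP\}$. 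By (1)--(3), $\OO_S(m+p)$ lies in $\{\LL,\PP\}$, $\{\RR,\PP\}$, or $\{\PP\}$ respectively. None of these contains $\NN$.
\end{itemize}

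\textbf{Main obstacle.} The only delicate point is the legality check in the key claim. Nonnegativity of the heap is immediate, but the proof also relies on the hypothesis $S\subset\ZZ_{\ge 2}$ through the bound $p\ge s+2$. This bound is what prevents the game from terminating at $m+p$ or at $m+p-s$. Terminating there would invoke the arbitrary terminal rule $W$ of a general Ending Partizan game and break the argument. Once this is verified, the rest is bookkeeping with the definitions of the outcome classes.
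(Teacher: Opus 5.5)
Your proposal is correct and is essentially the paper's proof: the second player answers a removal of $s$ with a removal of $p-s\in S$, which brings $m+p$ back to $m$ with the same player to move; this gives (1) and (2), then (3) follows from (1)--(2) and (4) by contraposition. Your explicit legality check is a detail the paper leaves implicit, though it only needs $p-s\ge 1$ (so that $s<p\le m+p$) and therefore does not really depend on $S\subset\ZZ_{\ge 2}$ as opposed to $S\subset\oZ$.
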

\begin{proof}
    Regarding $(1)$, assume that $\OO_S(m)=\LL$. Starting from $m+p$ tokens, suppose that Right moves first. If Right remove $s\in S$ tokens, then Left can win by taking $p-s\in S$ tokens because $\OO_S(m)=\LL$. As Left can win from $m+p$ tokens as the second player, we have $\OO_S(m+p)\in\{\LL,\PP\}$. Similarly, we can prove $(2)$. Regarding $(3)$, assuming that $\OO_S(m)=\PP$, then $\OO_S(m+p)=\PP$ by $(1)$ and $(2)$. Regarding $(4)$, let $\OO_S(m)\neq \NN$. Then $\OO_S(m+p)\neq \NN$ by $(1),(2)$ and $(3)$.
\end{proof}

From the above, the following is derived.
\begin{proposition}
    In the case of $p$-symmetric, if the set of terminal positions contains at least one $\PP$-position, or if it contains both $\LL$-positions and $\RR$-positions, then the sequence of outcomes $\seq$ is not purely periodic with period $1$.
\end{proposition}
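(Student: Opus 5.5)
Argue by contradiction. Suppose $\seq$ is purely periodic with period $1$, so $\OO_S(m)=\OO_S(0)$ for every $m\in\pZ$. Call this common outcome $X$. The idea is to play the hypothesis on the terminal positions against Proposition \ref{outcomes} and Proposition \ref{key lemma for p-sym}.

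First, the terminal positions. Since $S\subset\ZZ_{\ge 2}$ is $p$-symmetric, it is finite, and every $m<\min(S)$ is terminal, so $\OO_S(m)=W(m)$ there. Pure periodicity with period $1$ forces all terminal values $W(m)$ to equal the single outcome $X$. That already contradicts the case in which the terminal positions contain both an $\LL$-position and an $\RR$-position. So we are left with the case where some terminal position is a $\PP$-position, which forces $X=\PP$.

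Next, the non-terminal positions. Since $S$ is finite and nonempty, the position $m=\min(S)$ is non-terminal, and $f(m)$ contains $0$. By periodicity $\OO_S(0)=\PP$, so $\PP\in\OO(f(m))$. Proposition \ref{outcomes} then gives $\OO_S(m)=\NN\neq\PP=X$, a contradiction.

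Alternatively, one could use Proposition \ref{key lemma for p-sym}(3): it only yields $\OO_S(m+p)=\PP$, which is consistent with periodicity, so the direct argument through Proposition \ref{outcomes} is the one to rely on.

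Now the subtle point. In general Ending Partizan Subtraction Nim, terminal positions are exactly the $m$ with $m<\min(S)$, because $S$ is finite. So "the set of terminal positions" is $\{0,\dots,\min(S)-1\}$, and the argument above uses only that this set is nonempty and that $0$ is terminal. The only real obstacle is making sure that the hypothesis "contains both $\LL$ and $\RR$" is read as concerning values of $W$. With that reading the contradiction is immediate, and $p$-symmetry is needed only to guarantee that $S$ is finite, so that $\min(S)$ is a genuine non-terminal position reachable to $0$.
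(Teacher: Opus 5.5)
Your proof is correct for the statement as written, but it takes a different route from the paper. The paper gives no separate argument and says only that the statement is derived from the preceding Proposition~\ref{key lemma for p-sym}.

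You use nothing but Proposition~\ref{outcomes}. A constant sequence must agree with $W$ on $\{0,\dots,\min(S)-1\}$, which disposes of the mixed $\LL$/$\RR$ case at once. A constant $\PP$ sequence is impossible because $\min(S)$ has the $\PP$-option $0$. This is elementary and self-contained, and it shows that $p$-symmetry plays no role for pure periodicity. Your closing remark that $p$-symmetry is needed so that $\min(S)$ exists is mistaken: every nonempty $S\subset\oZ$ has a minimum, and the terminal positions are $\{0,\dots,\min(S)-1\}$ whether or not $S$ is finite.

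The paper's route instead propagates terminal outcomes along residue classes mod $p$. A terminal $\PP$ at $i$ gives $\OO_S(i+kp)=\PP$ for all $k$. Terminal $\LL$ at $i$ and $\RR$ at $j$ give $\OO_S(i+kp)\in\{\LL,\PP\}$ and $\OO_S(j+kp)\in\{\RR,\PP\}$. Combine this with the fact that no tail of the sequence can be constantly $\PP$: if $n\ge A+\min(S)$ and $n-\min(S)\in\PP$, then $n\in\NN$. Together these rule out period $1$ even after a preperiod $A>0$, which is the sense used in Proposition~\ref{LRs1}.

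That stronger version is where $p$-symmetry is genuinely needed. For the non-symmetric $S=\{2,3,6\}$, the terminal positions include both $\LL$ and $\RR$, yet the sequence is eventually constantly $\LL$. Your argument depends on positions below $\min(S)$, so it cannot reach the eventual statement.
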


Furthermore, in the case where the set of terminal positions consists only of $\NN$-positions (under misère play), the sequence of outcomes is also not purely periodic with period $1$. Conversely, if it contains only $\LL$-positions or $\RR$-positions, $\seq$ is trivially purely periodic with period $1$. If the terminal positions consist only $\LL$-positions and $\RR$-positions, a period of $1$ is possible.

Curiously, for example, in the game with $S=\{2,3\}$ and under the rule that a $0$ position is an $\LL$-position and $1$ position is an $\NN$-position, then all positions where there are more than two tokens are $\LL$-positions. In contrast, in the game with $S=\{2,3\}$ and under the rule that a $0$ position is an $\NN$-position and $1$ position is an $\LL$-position, the sequence becomes purely periodic with period $5$, which repeats as $\NN\LL\PP\LL\NN$.

In the case where $S$ is $p$-symmetric, the following holds.
\begin{proposition}\label{LRs1}
    In $\LL\RR$-Subtraction Nim with a $p$-symmetric $S\subset \ZZ_{\ge 2}$, the period of the sequence $\seq$ is greater than or equal to $2$. In particular, $\seq$ does not have $1$-period, $\LL$.
\end{proposition}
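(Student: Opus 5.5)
We need to show that in $\LL\RR$-Subtraction Nim with a $p$-symmetric $S\subset\ZZ_{\ge 2}$, the outcome sequence is not eventually constant. The natural reading is: the sequence is not eventually constant equal to $\LL$, and more generally no eventual period $1$ exists. The plan is to argue by contradiction using Proposition \ref{key lemma for p-sym} together with the terminal positions $0$ and $1$. Note first that $p$-symmetry forces $p-s\in S$ for $s=\min(S)$, so $p\ge 2\min(S)\ge 4$ and every element of $S$ is less than $p$.

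\textbf{Step 1: transport outcomes by $p$.} From $\OO_S(1)=\RR$, Proposition \ref{key lemma for p-sym} (2) gives $\OO_S(1+kp)\in\{\RR,\PP\}$ for every $k\ge0$, by induction on $k$. Indeed, once a value lies in $\{\RR,\PP\}$, parts (2) and (3) keep it in that set after adding $p$. Likewise, from $\OO_S(0)=\LL$, parts (1) and (3) give $\OO_S(kp)\in\{\LL,\PP\}$ for all $k$.

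\textbf{Step 2: rule out each possible constant value.} Suppose that for all $m\ge A$ we have $\OO_S(m)=X$ for a single outcome $X$.

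If $X=\LL$, choose $k$ with $1+kp\ge A$. Then $\OO_S(1+kp)=\LL$, contradicting Step 1.

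If $X=\RR$, use the positions $kp$, which by Step 1 lie in $\{\LL,\PP\}$; this is a contradiction.

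If $X=\PP$, use Theorem \ref{main theorem} (4): runs of $\PP$ have length at most $\min(S)-1$. (Alternatively, $\OO_S(n)=\PP$ forces $\OO_S(n+\min S)=\NN$ by Proposition \ref{outcomes}.)

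If $X=\NN$, then $S$ is finite, so Theorem \ref{main theorem} (5) bounds runs of $\NN$ by $\max(S)-1$. Directly, if all of $m-\max S,\dots,m-1$ are $\NN$, then $m$ is $\PP$.

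Hence the sequence has no period $1$, and in particular no eventual period $\LL$. "Period $\ge2$" should be read as: the sequence is periodic (by the earlier proposition for finite $S$), but $1$ is not a period.

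\textbf{Main obstacle.} The key point is Step 1. It relies on the odd terminal position $1$ being $\RR$, and on the fact that the $p$-shift never lets an $\RR$ or $\PP$ position turn into an $\LL$ position. That property comes from Right's mirroring strategy $s\mapsto p-s$. The mirror is always legal from $m+p$ because every element of $S$ is less than $p$, so after Left removes $s$, Right can remove $p-s$ and return to $m$.
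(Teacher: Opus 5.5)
Your proof is correct, and its core is exactly the paper's argument: the paper's entire proof is that Proposition~\ref{key lemma for p-sym} transports $\OO_S(1)=\RR$ to $\OO_S(1+kp)\in\{\RR,\PP\}$ for every $k$, which rules out an eventually constant $\LL$. Your remaining cases fill in what the paper leaves implicit, but the $\NN$ case needs no separate argument, since Step~1 already excludes it ($\{\RR,\PP\}$ does not contain $\NN$).
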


\begin{proof}
    Lemma \ref{key lemma for p-sym} implies that $\OO_S(1+kp)\in\{\RR,\PP\}$ inductively on $k$.
\end{proof}

\begin{theorem}
    In Ending Partizan Subtraction Nim, if the set of removable numbers $S$ is $p$-symmetric, then the outcome sequence $\{\OO_S(n)\}_{n\in \pZ}$ has $p$ as a period.
\end{theorem}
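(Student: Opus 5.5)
The plan is to derive this from Proposition \ref{key lemma for p-sym}. That proposition shows that moving from $m$ to $m+p$ can only change the outcome in a few restricted ways. So along each residue class modulo $p$ the outcome can change only finitely often, and hence it is eventually constant.

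First I will check that Proposition \ref{key lemma for p-sym} holds in the generality needed here: an arbitrary Ending Partizan Subtraction Nim with arbitrary terminal labels $W$, and $S\subset\oZ$ $p$-symmetric. Its proof is pure strategy copying. Against a move $s$ from $m+p$, the reply is $p-s\in S$, which reaches $m$. This reply is legal because $p-s\ge 1$ forces $s\le p-1$, and then $m+p-s-(p-s)=m\ge 0$. The position $m+p$ is never terminal, since it admits the move $p-s$ for any $s\in S$. So the argument does not use the $\LL\RR$ ending rule, and it does not need $\min(S)\ge 2$. I record the resulting transition rules for $\OO(m)\mapsto\OO(m+p)$:
\begin{equation*}
\LL\mapsto\{\LL,\PP\},\quad \RR\mapsto\{\RR,\PP\},\quad \PP\mapsto\{\PP\},\quad \NN\mapsto\text{anything}.
\end{equation*}
Item (4) of that proposition is exactly the statement that a non-$\NN$ outcome never turns into $\NN$.

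Next, fix a residue $r\in\{0,1,\dots,p-1\}$ and consider the sequence $\OO_S(r+kp)$ for $k\in\pZ$. I will define a rank by $\mathrm{rk}(\NN)=0$, $\mathrm{rk}(\LL)=\mathrm{rk}(\RR)=1$ and $\mathrm{rk}(\PP)=2$. The transition rules say that the rank is non-decreasing in $k$, and that whenever the outcome changes, the rank strictly increases. A direct switch between $\LL$ and $\RR$ is excluded. Hence each such sequence changes value at most twice, and there is a $K_r$ with $\OO_S(r+(k+1)p)=\OO_S(r+kp)$ for all $k\ge K_r$.

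Finally, I take $A=\max_r (r+K_rp)$. This maximum is over the finitely many residues $r$, which is where the finiteness of $S$ (automatic for $p$-symmetric $S$) enters through $p$ being a fixed integer. Then $\OO_S(m+p)=\OO_S(m)$ for every $m\ge A$, which is periodicity with period $p$ in the sense of the definition, with threshold $A$.

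The only delicate step is the first one: confirming that the copying argument really applies outside $\LL\RR$-Subtraction Nim, in particular that $m+p$ is never terminal and that the reply $p-s$ is always available. If a purely periodic conclusion were wanted instead, this plan would not give it directly, since a class can start at $\NN$ and switch later. Indeed the earlier $S=\{2,3\}$ example shows only eventual periodicity is to be expected. So I will interpret the statement via the paper's definition with some $A\ge 0$.
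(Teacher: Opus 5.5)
Your proposal is correct and takes essentially the same route as the paper: apply Proposition~\ref{key lemma for p-sym} along each residue class modulo $p$ to show that $\OO_S(r+kp)$ is eventually constant in $k$, then take the largest threshold over the $p$ residues. Your rank function (ranking $\NN$ below $\LL,\RR$ below $\PP$) compactly encodes the paper's four-case analysis ($\PP$ is absorbing; absent $\PP$, an $\LL$ or an $\RR$ is absorbing; otherwise the class is constantly $\NN$), and your check that the copying argument needs neither the $\LL\RR$ ending rule nor $\min(S)\ge 2$ makes explicit something the paper leaves implicit.
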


\begin{proof}
    For each $i\in\{0,1,\dots, p-1\}$,
    \begin{enumerate}
        \item[(1)] when there exists $k\in\pZ$ such that $\OO_S(i+kp)=\PP$, by Lemma \ref{key lemma for p-sym} $(3)$, for all $l\in\pZ$ with $l\ge k$, $\OO_S(i+lp)=\PP$.
        \item[(2)] Except $(1)$, namely assume that $\OO_S(i+kp)$ is never $\PP$ for any $k\in \pZ$, and if $\OO_S(i+kp)=\LL$ for some $k$, then $\OO_S(i+lp)=\LL$ for all $l\ge k$ by Lemma \ref{key lemma for p-sym} $(1)$.
        \item[(3)] Except $(1)$, namely assume that $\OO_S(i+kp)$ is never $\PP$ for any $k\in \pZ$, and if $\OO_S(i+kp)=\RR$ for some $k$, then $\OO_S(i+lp)=\RR$ for all $l\ge k$ by Lemma \ref{key lemma for p-sym} $(2)$.
        \item[(4)] Except $(1)$, $(2)$ and $(3)$, namely if we assume that $\OO_S(i+kp)$ is never $\PP$, $\LL$ nor $\RR$, it implies that $\OO_S(i+lp)=\NN$ for all $l\in\pZ$. In this case, we set $k=0$.
    \end{enumerate}
\end{proof}

\begin{proposition}\label{2-sym}
    In $\LL\RR$-Subtraction Nim with $S\subset \ZZ_{\ge 2}$, if the minimum period of the sequence of outcomes $\seq$ is $2$, then the pair of outcomes that appears in the period is either $\{\LL,\RR\}$ or $\{\LL,\NN\}$. Furthermore, if $S$ is a finite set, then only the pair $\{\LL,\RR\}$ occurs in $\seq$. When the period is $\{\LL,\RR\}$, then $S\subset 2\ZZ_{\ge 1}$. Conversely, if $S\subset 2\ZZ_{\ge 1}$, then the outcomes are
    \begin{equation*}
        \OO_S(n)=
        \left\{
            \begin{array}{ll}
                \LL & \text{if } n \text{ is even}, \\
                \RR & \text{if } n\text{ is odd}. 
            \end{array}
        \right.
    \end{equation*}
\end{proposition}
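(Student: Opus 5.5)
We would argue directly from Theorem \ref{main theorem} and Proposition \ref{outcomes}, working in the eventually periodic tail. Suppose the minimum period is $2$. Then there is $A$ such that for $m\ge A$ the outcomes alternate between two \emph{distinct} values $X,Y$, so every position $m\ge A$ has both neighbours $m\pm1$ carrying the other value.

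\textbf{Step 1: which pairs survive.} We will rule out the four pairs containing $\RR$, $\PP$ or $\NN$ that are forbidden by Theorem \ref{main theorem}. If $\RR\in\{X,Y\}$, Theorem \ref{main theorem}(1) forces the neighbours to be $\LL$, so the pair is $\{\LL,\RR\}$. If $\PP\in\{X,Y\}$, Theorem \ref{main theorem}(2) forces the neighbours into $\{\LL,\PP\}$, so the pair would be $\{\LL,\PP\}$. That case we kill as follows. Take $n\ge A$ with $\OO_S(n)=\PP$. Then $n+\min(S)$ has the option $n$, which is a $\PP$-position, so $\OO_S(n+\min(S))=\NN$ by Proposition \ref{outcomes}; but $\NN\notin\{\LL,\PP\}$. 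If $\NN\in\{X,Y\}$ and $\PP,\RR$ do not occur, Theorem \ref{main theorem}(3) leaves only $\{\LL,\NN\}$. So only $\{\LL,\RR\}$ and $\{\LL,\NN\}$ remain.

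\textbf{Step 2: finite $S$ excludes $\{\LL,\NN\}$.} Choose $n\ge A+\max(S)$ with $\OO_S(n)=\NN$. All options $n-s$ with $s\in S$ lie in the tail, so their outcomes lie in $\{\LL,\NN\}$, and $f(n)\neq\varnothing$. By Proposition \ref{outcomes}, being $\NN$ requires either a $\PP$ option or both an $\LL$ and an $\RR$ option, and neither exists. This contradiction is the one place where finiteness matters: it guarantees that every option of a large $n$ lies in the periodic tail. This is consistent with the remark on $S=\ZZ_{\ge2}$.

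\textbf{Step 3: $\{\LL,\RR\}$ forces $S\subset 2\ZZ_{\ge1}$.} Suppose some odd $s\in S$. Pick $n\ge A+s$ with $\OO_S(n)=\RR$; such $n$ has $f(n)\ne\varnothing$. By Proposition \ref{outcomes}, every option of $n$ lies in $\{\RR,\NN\}$. But $n-s\ge A$ has parity opposite to $n$, so in the alternating tail $\OO_S(n-s)=\LL$, a contradiction.

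\textbf{Step 4: the converse.} If $S$ consists of even numbers, every move preserves parity. We then induct on $n$ to show that even $n$ are $\LL$ and odd $n$ are $\RR$. For terminal $n$ this is the definition of $W$. For non-terminal $n$, all options share the parity of $n$ and so, by induction, all have outcome $\LL$ (even $n$) or all have outcome $\RR$ (odd $n$); Proposition \ref{outcomes} then gives $\OO_S(n)=\LL$ or $\RR$ respectively.

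The main subtlety throughout is choosing $n$ large enough that every option used lies in the periodic tail and that $n$ is non-terminal. With infinite $S$ this can be arranged only for a single fixed $s$, which is exactly why Steps 1 and 3 go through in general while Step 2 needs $S$ finite.
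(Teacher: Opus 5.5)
Your proof is correct and follows the paper's argument essentially step for step. You restrict the tail pairs via Theorem~\ref{main theorem}, eliminate $\{\LL,\PP\}$ and (for finite $S$) $\{\LL,\NN\}$ using Proposition~\ref{outcomes}, and rule out odd elements of $S$ by a parity clash. The only differences are cosmetic: you dispose of $\{\LL,\PP\}$ by looking forward at $n+\min(S)$ (it has a $\PP$ option, so it is $\NN$) rather than backward at the option $n-\min(S)$ as the paper does, and you write out the parity induction for the converse explicitly.
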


\begin{proof}
    If $S\subset 2\oZ$, then any move preserves the parity of the number of tokens. Thus, 
    \begin{equation*}
        \OO_S(n)=
        \left\{
            \begin{array}{ll}
                \LL & \text{if } n \text{ is even}, \\
                \RR & \text{if } n\text{ is odd},
            \end{array}
        \right.
    \end{equation*}
    holds.

    Assume that the minimum period of the sequence $\seq$ is $2$. Theorem \ref{main theorem} implies that the only possible pairs of outcomes which appear periodically are
    \begin{enumerate}
        \item $\{\LL,\NN\}$,
        \item $\{\LL,\PP\}$,
        \item $\{\LL,\RR\}$.
    \end{enumerate}
    The periodicity condition implies the existence for $A\in\pZ$ such that for all $m\in\pZ$ with $m\ge A$, we have $\OO_S(m+2)=\OO_S(m)$. 

    We show by contradiction that the pair $\{\LL,\PP\}$ cannot occur. Assume for all $n$ with $n\ge A$, $\OO_S(n)\in \{\LL,\PP\}$. Let $n_1$ be such that $n_1\ge A+\min(S)$ and $\OO_S(n_1)=\PP$. By Proposition \ref{outcomes}, $\OO_S(n-\min(S))=\NN\not\in \{\LL,\PP\}$. Hence, a contradiction arises.

    Furthermore, we show that if $S$ is a finite set, only the pair $\{\LL,\RR\}$ occurs in $\seq$ by contradiction, and $S\subset 2\ZZ_{\ge 1}$. Assume for all $n$ with $n\ge A$, $\OO_S(n)\in \{\LL,\NN\}$. Let $n_2$ be such that $n_2\ge A+\max(S)$ and $\OO_S(n_2)=\NN$. By the assumption that only the pair $\{\LL,\NN\}$ occurs in the period, for all $s\in S$, $\OO_S(n_2-s)\in \{\LL,\NN\}$. This contradicts $\OO_S(n_2)=\NN$ and Proposition \ref{outcomes}, so the only possible pair of outcomes which appear periodically is $\{\LL,\RR\}$.
    
    Finally, we show that $S\subset 2\ZZ_{\ge 1}$ assuming that the period is $\{\LL,\RR\}$. Assume, for contradiction, that there exists an odd element $s_1\in S$. Then take $n_3\in\pZ$ such that $n_3\ge A+s_1$. Under this assumption, we have $\OO_S(n_3)\neq \OO_S(n_3-s_1)$. In particular, either $\OO_S(n_3)=\LL$ and $\OO_S(n_3-s_1)=\RR$, or $\OO_S(n_3)=\RR$ and $\OO_S(n_3-s_1)=\LL$. However, both cases contradict Proposition \ref{outcomes}. Therefore, $S\subset 2\ZZ_{\ge 1}$ holds.
\end{proof}

\begin{example}\label{counter}
    In $\LL\RR$-Subtraction Nim with $S=\{2k+3\mid k\in\pZ\}\cup \{4\}$, the outcomes are below:
    \begin{equation*}
        \OO_S(n)=
        \left\{
            \begin{array}{ll}
                \LL & \text{if }n=0,2,3 \text{ or }n\in\{2m+7\mid m\in\pZ\}, \\
                \RR & \text{if }n=1, \\
                \NN & \text{if }n=4,5 \text{ or }n\in\{2m+6\mid m\in\pZ\}.
            \end{array}
        \right.
    \end{equation*}
\end{example}

\section{Recent Results}
\subsection{Imbalance of Winning Strategies}
Given the same set $S$ of removable numbers, is there any relationship between the outcomes of Subtraction Nim in normal play and $\LL\RR$-Subtraction Nim? In general, such a relationship does not appear to hold. Although both games exhibit periodic behavior in their outcome sequences, their periods are often coprime. For a fixed removable set $S$, is there a meaningful relationship between the outcomes of Subtraction Nim in normal play and those of $\LL\RR$-Subtraction Nim? During a discussion in Combinatorial Game Theory Colloquium V, François Carret found the following Theorem.

\begin{theorem}
    Let $S$ be a non-empty subset of positive integers greater than or equal to $2$. Assume that whenever $n$ is $\RR$-position in $\LL\RR$-Subtraction Nim with $S$, $n$ is $\PP$-position in normal play. Then, the following are true:
    \begin{enumerate}
        \item If $n$ is $\PP$-position in $\LL\RR$-Subtraction Nim with $S$, $n$ is $\PP$-position too in normal play.
        \item If $n$ is $\NN$-position in $\LL\RR$-Subtraction Nim with $S$, $n$ is $\NN$-position too in normal play.
    \end{enumerate}
\end{theorem}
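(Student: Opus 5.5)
We argue by strong induction on $n$, proving (1) and (2) for all positions up to $n$ at once. Write $\OO_S(n)$ for the $\LL\RR$-outcome and $\OO^{\mathrm{np}}(n)\in\{\NN,\PP\}$ for the normal-play outcome with the same $S$. Recall that in normal play $\OO^{\mathrm{np}}(n)=\PP$ if and only if every option of $n$ is an $\NN$-position. In particular this holds vacuously when $f(n)=\varnothing$, i.e.\ when $n<\min(S)$.

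\emph{Terminal positions.} For $n<\min(S)$ the position is terminal, so $\OO_S(n)\in\{\LL,\RR\}$ by Proposition \ref{outcomes}, and (1) and (2) are vacuous. The positions $n\ge\min(S)$ have options, so $\OO_S(n)$ is given by the second part of Proposition \ref{outcomes}. A key observation is that the hypothesis ``$\RR$ in $\LL\RR$ implies $\PP$ in normal play'' transfers information to options: every option $m$ with $\OO_S(m)=\RR$ is normal-play $\PP$.

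\emph{Proof of (1).} Suppose $\OO_S(n)=\PP$ with $f(n)\neq\varnothing$. By Proposition \ref{outcomes}, every option $m$ of $n$ has $\OO_S(m)=\NN$. Since $m<n$, the induction hypothesis (2) gives $\OO^{\mathrm{np}}(m)=\NN$ for all such $m$. Hence $\OO^{\mathrm{np}}(n)=\PP$.

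\emph{Proof of (2).} Suppose $\OO_S(n)=\NN$. Then $f(n)\neq\varnothing$, and Proposition \ref{outcomes} leaves two cases.
\begin{itemize}
\item[(a)] Some option $m$ has $\OO_S(m)=\PP$. If $f(m)\neq\varnothing$, the induction hypothesis (1) gives $\OO^{\mathrm{np}}(m)=\PP$. If $f(m)=\varnothing$, then $m$ is terminal, so $\OO_S(m)\in\{\LL,\RR\}$ and cannot be $\PP$. Either way, $n$ has a normal-play $\PP$ option, so $\OO^{\mathrm{np}}(n)=\NN$.
\item[(b)] $n$ has options $m_L$ and $m_R$ with $\OO_S(m_L)=\LL$ and $\OO_S(m_R)=\RR$. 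By the standing assumption, $\OO^{\mathrm{np}}(m_R)=\PP$, so again $\OO^{\mathrm{np}}(n)=\NN$.
\end{itemize}
Note that case (b) is exactly where the hypothesis of the theorem is used.

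The only delicate point is making sure the induction never asks us to apply (1) to a terminal position. In the normal-play convention a terminal position is $\PP$ while in $\LL\RR$-Nim it is $\LL$ or $\RR$, and this mismatch could break the transfer. It is handled by the remark in case (a) that a terminal position is never $\LL\RR$-$\PP$. Beyond that, the argument is a routine simultaneous induction, and I expect no real obstacle. Lemma \ref{main lem} and Theorem \ref{main theorem} are not needed.
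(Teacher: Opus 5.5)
Your proof is correct and is essentially the paper's argument. It uses the same simultaneous strong induction: (1) follows from induction hypothesis (2) applied to the all-$\NN$ options, and (2) follows by locating an option that is $\RR$ or $\PP$ in $\LL\RR$-Subtraction Nim and turning it into a normal-play $\PP$ option via the standing assumption or induction hypothesis (1). Your cases (a) and (b) just unpack the paper's single step ``some option lies in $\{\RR,\PP\}$'' from Proposition \ref{outcomes}, and the terminal-position check is harmless but unnecessary, since such positions are never $\PP$ in $\LL\RR$-Subtraction Nim and are $\PP$ in normal play anyway.
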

\begin{proof}
    Let $S\subset \ZZ_{\ge 2}$ be a non-empty subset. We write $\epsn(n)$ for the outcome of the position with $n$ tokens in $\LL\RR$-Subtraction Nim, and $\sn(n)$ for the outcome of the position with $n$ tokens in classical Subtraction Nim with the same removable set $S$.

    Assume that if the position with $n$ tokens is an $\RR$-position in $\LL\RR$-Subtraction Nim, then it is a $\PP$-position in classical Subtraction Nim.

    We prove the following two statements simultaneously by induction on $n$;
    \begin{enumerate}
        \item[(1)] $\epsn(n)=\PP$ implies $\sn(n)=\PP$.
        \item[(2)] $\epsn(n)=\NN$ implies $\sn(n)=\NN$.
    \end{enumerate}
    We assume $n\ge \min(S)$, since for $n<\min(S)$, by the rule of the game, no moves are available for either player and we have $\epsn(n)\in\{\LL,\RR\}$.

    Assume as the inductive hypothesis that both $(1)$ and $(2)$ hold for all non-negative integers less than $n$.

    (Proof of $(1)$ ) Assume $\epsn(n)=\PP$. Then, for all $s\in S$ with $n-s\ge 0$, we have $\epsn(n-s)=\NN$. By the inductive hypothesis $(2)$, this implies $\sn(n-s)=\NN$ for all such $s$. Therefore, $\sn(n)=\PP$.

    (Proof of $(2)$ ) Assume $\epsn(n)=\NN$. Then, there exists $s_1\in S$ such that $\epsn(n-s_1)\in\{\RR,\PP\}$. By the initial assumption and inductive hypothesis $(1)$, it follows that $\sn(n-s_1)=\PP$. Hence, $\sn(n)=\NN$.

    This completes the inductive proof of both $(1)$ and $(2)$.
\end{proof}
\subsection{Nowakowski's Normalization}
During a discussion in Combinatorial Game Theory Colloquium V, Richard Nowakowski made the following suggestion. $\LL\RR$-Subtraction Nim can be regarded as a combinatorial game under normal play by the following; Positions where no moves are available and the winner is Left are replaced by $1=\{0\mid\}$, and those where no move are available and the winner is Right are replaced by $-1=\{\mid 0\}$.

By those transformation, each position in $\LL\RR$-Subtraction Nim can be assigned not only an outcome class but also a game value in the sense of normal play.

In $\LL\RR$-Subtraction Nim, is the sequence of Nowakowski-normalized game values periodic for every finite subtraction set $S$ ? Our preliminary calculation shows that when $S=\{2,3\}$, the game values are periodic with the period $5$, but when $S=\{2,3,6\}$, the game values seem to be non-periodic, even additively.

To be precise, we write $[n]$ for the game value of $n$ tokens and $-[n]$ for its negative game value, $\OO_S([n+9]-[n])=\PP$ only for $n\equiv 0,2,4,5 \mod{9}$ except for $n=4$, and the second difference $\OO_S([28+9k]-2[19+9k]+[10+9k])$ is never $\PP$ as far as we computed.

\bibliography{sn-bibliography}%

\end{document}